\documentclass[preprint,a4paper,12pt]{elsarticle}

\usepackage[utf8]{inputenc}
\usepackage[english]{babel}
\usepackage{slantsc} 
\usepackage[mathscr]{euscript}
\usepackage{relsize}
\usepackage{amssymb}
\usepackage{hyperref}
\usepackage{xcolor}
\usepackage{amsmath}
\usepackage{amsfonts}
\usepackage{amssymb}
\usepackage{amsthm}
\usepackage{graphicx}
\usepackage{hyperref}
\usepackage{xcolor}

\definecolor{gold}{HTML}{FDD017}
\definecolor{orange}{HTML}{FF8000}
\definecolor{deep sky blue}{HTML}{3BB9FF}
\definecolor{blue}{HTML}{0101DF}
\definecolor{red}{HTML}{DF0101}
\definecolor{co}{HTML}{2020A0}
\newcommand{\gold}[1]{{\color{gold} #1}}
\newcommand{\red}[1]{{\color{red} #1}}

\newcommand{\blue}[1]{{\color{blue} #1}}

\swapnumbers

\theoremstyle{theorem}
\newtheorem{thm}{Theorem}[section]
\newtheorem{prop}[thm]{Proposition}

\theoremstyle{definition}

\newtheorem{rem}[thm]{Remark}

\newcommand{\ssum}{{\textstyle\sum}}
\newcommand{\seq}[3]{#1_{#2},\dotsc,#1_#3}
\newcommand{\floor}[1]{\lfloor#1\rfloor}
\newcommand{\ZZ}{\mathbb{Z}}
\newcommand{\CC}{\mathbb{C}}
\newcommand{\FF}{\mathbb{F}}
\newcommand{\abs}[1]{|#1|}

\begin{document}

\begin{frontmatter}
\title{A bootstrap for the number of 
$\FF_{q^r}$-rational points\\ on a curve over $\FF_q$}

\author{S. Molina$^1$, N. Sayols$^2$, and S. Xambó-Descamps$^2$}
\address{$^1\,$Centre de Recerca Matemàtica,
\textsf{\footnotesize\upshape santiago.molina@uni.lu}\\
$^2\,$Universitat Politècnca de Catalunya,
\textsf{\footnotesize\upshape narcissb@gmail.com} \&
\textsf{\footnotesize\upshape sebastia.xambo@upc.edu}}

\begin{abstract}
In this note we present a fast algorithm
that finds for any $r$ the number $N_r$ of $\FF_{q^r}$ rational points 
on a smooth absolutely irreducible curve $C$ defined over $\FF_q$
assuming that we know $N_1,\dotsc,N_g$, where $g$ is the genus of $C$.
The proof of its validity is given in detail and its working are illustrated with
several examples. In an Appendix we list the Python function
in which we have implemented the algorithm together with
other routines used in the examples. 
\end{abstract}

\begin{keyword}
Finite field, algebraic curves, rational points

\MSC[2010] 
     14G15 
\sep 14H10 
\sep 14H45 
\sep 14H50 
\sep 14H52 
\sep 14D10 
\end{keyword}

\end{frontmatter}

\section*{Glossary}

In this note, \textit{curve over} $\FF_q$ means 
(unless explicitly stated otherwise) 
a smooth absolutely irreducible projective 
curve over the finite field $\FF_q$ of cardinal $q$.

The \textit{Hasse-Weil-Serre upper bound} for the number 
of $\FF_q$-rational points on a curve $C/\FF_q$ is
$N_q(g)=q+1 + g \floor{2\sqrt{q}}$
\cite{Serre-1983a,Serre-1983b,Serre-1984}.
 
A curve $C/\FF_q$  is said to be \textit{maximal} if 
the number of its $\FF_q$ points equals $N_q(g)$.

For historical aspects and background, we refer
to the excellent surveys \cite{Torres-2008, vdGeer-2015}, and the 
many references provided there. 
The general context provided by the
Weil conjectures is outlined in \cite{Hartshorne-1983},
Appendix C.

\setcounter{section}{0}
\section{Ingredients}

The \textit{zeta function} of a projective variety $X/\FF_q$ is the power series
\begin{equation}\label{eq:Z}
Z(T)=Z(X/\FF_q)= \exp\left(\ssum_{r=1}^{\infty}  N_r(X)\tfrac{T^r}{r}\right),
\end{equation}
where $N_r=N_r(X)$ denotes the number of $\FF_{q^r}$-rational points of $X$. 
This function generates all the numbers $N_r$ according to the relation
\begin{equation}\label{eq:N_r}
N_r = \tfrac{1}{(r-1)!}\tfrac{d^r}{dT^r}\log Z(T)\vert_{T=0}.
\end{equation}
The needed information about $Z(T)$ is provided by the \textit{Weil conjectures}
(see \cite{Hartshorne-1983} for the history of work on them and in particular
about their proofs). For the case of curves $C/\FF_q$, which is the one we need
in this note, they were actually proved by Weil himself
\cite{Weil-1948} and can be summarized as follows (see 
\cite{Raskin-2007} for proofs in present day algebraic geometry language):

\paragraph{Rationality}
$Z(T)=\frac{P(T)}{(1-T)(1-qT)}$, with $P(T)\in \ZZ[T]$. 

\paragraph{Functional equation}
If $g$  is the genus of $C$, $P(T)=q^g T^{2g}P(1/qT)$. 
In particular, $\deg(P)=2g$.

\paragraph{Analogue of the Riemann hypothesis}
$P(T)=\prod_{j=1}^{2g}(1-\alpha_j T)$, with $\alpha_j\in\CC$ such that
$\abs{\alpha_j}=\sqrt{q}$. 

\section{Basic algorithm}

Using (\ref{eq:N_r}), it is easy to conclude that
\begin{eqnarray}\label{eq:Nr}
N_r(C)= q^r+1-S_r,\quad S_r=\sum_{j=1}^{2g} \alpha_j^r.
\end{eqnarray}
Thus we see that knowing $N_r$ is equivalent to knowing $S_r$.

Now we can describe a procedure for computing the $N_r$ for
$r>2g$ assuming that we know $N_1,\dotsc,N_{2g}$.
Since the \textit{Newton sums} $S_r=\sum_{j=1}^{2g} \alpha_j^r$
are symmetric polynomias of the $\alpha_i$, they are polynomial
expressions in the (signed) elementary symmetric polynomials 
$c_1,\dotsc,c_{2g}$ of $\alpha_1,\dotsc,\alpha_g,\alpha_{g+1},\dotsc,\alpha_{2g}$
(in other words, $c_j=(-1)^j\sigma_j$, where $\sigma_j$ 
is the standard elementary symmetric polynomial
of degree $j$ in $\alpha_1,\dotsc,\alpha_{2g}$). 
Even though these expressions were essentially derived in the \textsc{xvii}
century (first by Girard and later by Newton), for 
convenience we include their statement
and a proof in the \ref{ap:newton-sums}.

For our purposes here, the net result is that we can 
proceed as follows:
\begin{enumerate}
\item For $j=1,\dotsc,2g$, set $S_r=q^r+1-N_r$.
\item Use the formula (\ref{eq:rec2}) to 
recursively compute $c_1,\dotsc,c_{2g}$:
\[
c_j = -(S_j+c_1S_{j-1}+\cdots+c_{j-1}S_1)/j.
\]
\item Use the formula (\ref{eq:rec1})
to successively get $S_{2g+1},\dotsc,S_{r}$.
\item Set $N_i = q^i+1-S_i$ for $i=2g+1,\dotsc,r$.
\end{enumerate}

\section{An improved algorithm}\label{sec:main}

Since $P$ has real coefficients, if $\alpha_j$ is a root,
then so is $\bar{\alpha}_j=q/\alpha_j$. The possible real roots
are $\pm\sqrt{q}$, and there is an even number of them because 
the degree of $P$ is even. In fact, there must be an even number of $-\sqrt{q}$ 
(and hence an even number of $\sqrt{q}$) as otherwise
the coefficient of $T^{2g}$ (namely $q^g$) would be negative.
This implies that we can index the roots of $P$ in such a
way that $\alpha_{2g-j+1}=\bar{\alpha}_{j}=q/\alpha_j$, $j=1,\dotsc,g$.
Therefore, $P$ has the following form:
\begin{equation}\label{eq:P}
P(T) = c_{2g} T^{2g} + c_{2g-1} T^{2g-1}+\cdots+c_{1}T+c_0,
\quad c_{2g}=q^g,\ c_0=1.
\end{equation}

\begin{prop}
\label{thm:main} We have that
$c_{g+l}=q^{l}c_{g-l}$ for $l=1,\dotsc,g$.
\end{prop}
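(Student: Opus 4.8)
The plan is to obtain the claimed identities by reading them straight off the functional equation $P(T)=q^gT^{2g}P(1/(qT))$, expanding both sides as polynomials in $T$ and matching coefficients; this is cleaner than arguing on the roots directly (although one could equally well group the factors $(1-\alpha_jT)(1-\alpha_{2g-j+1}T)=(1-\alpha_jT)(1-\tfrac{q}{\alpha_j}T)$ and expand).

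First I would substitute the explicit form $P(T)=\sum_{i=0}^{2g}c_iT^i$ from (\ref{eq:P}) into the right-hand side of the functional equation. Evaluating at $1/(qT)$ gives $P(1/(qT))=\sum_{i=0}^{2g}c_i q^{-i}T^{-i}$, hence $q^gT^{2g}P(1/(qT))=\sum_{i=0}^{2g}c_i q^{g-i}T^{2g-i}$. Next I would reindex this sum by $k=2g-i$, so that $i=2g-k$ also ranges over $0,\dotsc,2g$, obtaining $q^gT^{2g}P(1/(qT))=\sum_{k=0}^{2g}c_{2g-k}\,q^{k-g}\,T^{k}$. Since the functional equation asserts this equals $P(T)=\sum_{k=0}^{2g}c_kT^k$ and comparison of coefficients in $\ZZ[T]$ is legitimate, we get $c_k=q^{k-g}c_{2g-k}$ for every $k=0,\dotsc,2g$. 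Taking $k=g+l$ yields $c_{g+l}=q^{l}c_{g-l}$ for $l=1,\dotsc,g$, which is exactly the assertion. (As a sanity check, $k=2g$ recovers $c_{2g}=q^gc_0=q^g$, consistent with (\ref{eq:P}).)

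There is essentially no serious obstacle: the only things to be careful about are the bookkeeping in the reindexing step and the observation that, although the functional equation is an identity of Laurent polynomials, multiplying through by $q^gT^{2g}$ turns it into an identity of genuine polynomials of degree $2g$, so equating coefficients is valid. One could also note in passing that the relation $c_{g+l}=q^lc_{g-l}$ makes the $g$ coefficients $c_{g+1},\dotsc,c_{2g}$ redundant, which is precisely what the improved algorithm in Section~\ref{sec:main} will exploit.
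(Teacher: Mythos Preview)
Your proof is correct and proceeds along the same lines as the paper's: both arguments compare coefficients in a functional-equation identity for the numerator of $Z(T)$. The only cosmetic difference is that the paper works with the monic companion $f(T)=\prod_j(T-\alpha_j)=c_0T^{2g}+c_1T^{2g-1}+\cdots+c_{2g}$ and obtains $T^{2g}f(q/T)=q^gf(T)$ from the root pairing $\alpha_j\leftrightarrow q/\alpha_j$ established just before the proposition, whereas you invoke the functional equation for $P(T)$ directly from the Weil statements in Section~1; after that, the coefficient matching is identical.
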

\begin{proof}
Since $\alpha_j\mapsto q/\alpha_j$ exchanges
$\alpha_1,\dotsc,\alpha_g$ and $\alpha_{2g},\dotsc,\alpha_{g+1}$, if we set $f(T)=\prod_{j=1}^{2g}(T-\alpha_j)=c_0T^{2g}+c_1T^{2g-1}+\cdots+c_{2g-1}T+c_{2g}$,
then $T^{2g}f(q/T)$ has the same roots as $f(T)$ and therefore
$T^{2g}f(q/T)=c_{2g}f(T)=q^g f(T)$. Now the claim follows by equating
the coefficients of $T^{g+l}$ on both sides: on the right we get
$q^gc_{g-l}$ and on the left $ q^{g-l}c_{g+l}$.
\end{proof}

This proposition gives the boostrap at the root of our improved algorithm: after
computing $c_1,\dotsc,c_g$ from $N_1,\dotsc,N_g$ as in the basic algorithm, 
we automatically get $c_{g+1},\dotsc,c_{2g}$, namely
$qc_{g-1},\dotsc,q^{g-1}c_1, q^gc_{0}$, and so we have the following improved procedure:

\begin{enumerate}
\item For $j=1,\dotsc,g$, set $S_j=q^j+1-N_j$.
\item Use the formula (\ref{eq:rec2}) to
get $c_j$ for $j=1,\dotsc,g$:
\[
c_j = -(S_j+c_1S_{j-1}+\cdots+c_{j-1}S_1)/j.
\]
\item For $j=g+1,\dotsc,2g$, set $c_{j}=q^{j-g}c_{2g-j}$, get
$S_{j}$ with equation (\ref{eq:rec2}), 
\[
S_{j}=-(c_1S_{j-1}+\cdots+c_{j-1}S_1+jc_{j}),
\]
and set $N_{j}=q^{j}+1-S_{j}$.
\item For $j>2g$, use the formula (\ref{eq:rec1}) to recursively compute
the 
\[
S_j=-(c_1S_{j-1}+\cdots+c_{2g}S_{j-2g})
\]
and set $N_j = q^{j}+1-S_{j}$.
\end{enumerate}

We include the listing of our Python implementation of this procedure
in \ref{ap:python-code} (the function \textsf{XN}).

\begin{rem}\label{rem:subsequences}
Thus the infinite sequence $\{N_j(C)\}_{j\ge1}$ only depends
on $q$ and the list $[N_1,\dotsc,N_g]$. One interesting 
consequence is that given a positive integer $s$, the subsequence
$\{N_{sj}(C)\}_{j\ge1}$ must be the result of computing the long
sequence for $q^s$ and the list $[N_s,\dotsc,N_{sg}]$.
\end{rem}

\section{Elliptic curves revisited}

Over $\FF_2=\ZZ_2$ there are 32 cubic polynomials in \textit{normal form}
(cf. \cite{Husemoller-1987} or \cite{Silverman-2009} for notations and terminology)
\[
E=y^2+a_1 xy +a_3 +x^3+a_2x^2+a_4x+a_6
\]
of which precisely 16 are non-singular. For these cases, $g=1$, the HWS bound is
$m=\floor{2\sqrt{2}}=2$ and all the integers in the HWS interval
$[1,5]$ occur as $N_1(E)$ for some $E$ (this can be be checked 
with Deuring's algorithm, which is explained, and implemented, in
the subsection ``The Deuring function'' of \ref{ap:python-code}).
Now a straighforward computation yields the following distribution:
\begin{align*}
N &\quad E\\[3pt]
1\; & \quad  y^2 + y + x^3 + x + 1, \ y^2 + y + x^3 + x^2 + 1\\
2\; & \quad y^2 + x y + x^3 + x^2 + 1,\quad
            y^2 + x y + x^3 + x^2 + x,\quad\\
  &\quad y^2 + (x + 1) y + x^3 + 1,\quad y^2 + (x + 1) y + x^3 + x + 1\\
3\; & \quad y^2 + y + x^3,\quad y^2 + y + x^3 + 1\\
    & \quad y^2 + y + x^3 + x^2 + x,\quad y^2 + y + x^3 + x^2 + x + 1\\
4\; & \quad y^2 + x y + x^3 + 1,\quad y^2 + x y + x^3 + x\\
    & \quad y^2 + (x + 1) y + x^3 + x^2,\quad y^2 + (x + 1) y + x^3 + x^2 + x\\
5\; & \quad y^2 + y + x^3 + x, \quad y^2 + y + x^3 + x^2
\end{align*}

Computing the sequences of values returned by \textsf{XN}
with inputs $q=2$ and $[N_1]$, for $N_1=1,\dotsc,5$, and $k=20$
we get the following data (the top row is the maximum value
$N_q(1)$ of $\# E(\FF_q)$ supplied by ``Serre's procedure'', as
described in \ref{ap:python-code}):

\newcommand{\dg}[1]{\gold{\bf#1}}
\medskip
\begin{tabular}{|c|cccccccccc|}
\hline
$S$ & 5 	  & 9 		& 14 	   & 25 	  & 44   	 & 81 		& \blue{150} & \blue{289} & \blue{558} 	& 1089 \\
\hline\hline
$N$ & 1 	  & 5 		& 13 	   & \red{25} & 41 		 & 65 		& 113 		 & 225 		  & 481 		& 1025 \\
    & 2 	  & 8 		& \red{14} & 16 	  & 22 		 & 56 		& 142 		 & \dg{288} & 518 		&  968 \\
    & 3 	  & \red{9} &  9 	   &  9 	  & 33 		 & \red{81} & 129 		 & 225 		  & 513 		& \red{1089} \\
    & 4       & 8 		&  4 	   & 16 	  & \red{44} & 56 		& 116 		 & \dg{288} 		  & 508 		&  968 \\
    & \red{5} & 5 		&  5 	   & 25 	  & 25 		 & 65 		& \dg{145}& 225 		  & \dg{545} & 1025 \\
\hline
\end{tabular}

\renewcommand{\k}{\hspace{3pt}}
\medskip
\begin{tabular}{|@{\k}c@{\k}|@{\k}c@{\k}c@{\k}c@{\k}c@{\k}c@{\k}c@{\k}c@{\k}c@{\k}c@{\k}c@{\k}|}
\hline
$S$ 
    & \blue{2139} & 4225 & 8374 & 16641 & \blue{33131} & \blue{66049} & \blue{131797} & 263169 & \blue{525737} & 1050625 \\
\hline\hline
$N$ & 2113 & \red{4225} & 8321 & 16385 & 32513 & 65025 & 130561 & 262145 & \dg{525313} & \red{1050625} \\
    & 1982 & 4144 & \red{8374} & 16472 & 32494 & \dg{65088} & 131174 & 263144 & 525086 & 1047376 \\
    & 2049 & 3969 & 8193 & \red{16641} & 32769 & 65025 & 131073 & \red{263169} & 524289 & 1046529 \\
    & \dg{2116} & 4144 & 8012 & 16472 & \dg{33044} & \dg{65088} & 130972 & 263144 & 523492 & 1047376 \\
    & 1985 & 4225 & 8065 & 16385 & 33025 & 65025 & \dg{131585} & 262145 & 523265 & 1050625 \\
\hline
\end{tabular}

The red entries are maximal values. The blue values of row $S$
indicate that no elliptic curve of the five defined over $\FF_2$ 
achieves them, and in this case
the yellow entries indicate the curve (or two curves in two cases) 
that yield the highest value. Here we remark that the tables
agree with the conclusions in Table 1 of
\cite{Yaghoobian-Blake-1994} (page 305), except for the
$k=8$, which is classified there as maximal (provided by $E_4$),
but in the date above we see that $S=289$ and that the maximum achieved
by our five elliptic curves is $288$ (two of them, $E_2$ and $E_4$).
This means that there is an elliptic curve \textit{defined over}
$\FF_{2^8}$ that has 289 rational points, one more than the maximum
of the number of $\FF_{2^8}$-rational points for our five curves. 
In fact, since in the first six columns the maximum is achieved,
Remark \ref{rem:subsequences} tells us that
that curve cannot be defined over $\FF_{2^k}$ for $k=2,4$. 

\begin{rem}
Of course, the algorithm expects that $N_1$ is known, for a given
$q$. A different question is finding the $N_1$ points explicitly,
which is fundamental in applications such as in coding theory.
For small $q$, this can often be computed in a straightforward
manner, but otherwise the problem of finding fast algorithms is
quite subtle and appears to be quite involved (cf. Schoof's \cite{Schoof-1995}).
\end{rem}

\section{On the Klein quartic}

The \textit{Klein quartic} $C/\FF_2$ is given by the homogeneous equation
\begin{equation}\label{eq:k4ic}
F(x,y,z)=x^3y+y^3z+z^3x.
\end{equation}
It is is non-singular, absolutely irreducible and has genus 3. 
Let us compute $N_1,N_2,N_3$. 
First notice that the three points (in homogeneous coordinates)
$(1,0,0)$, $(0,1,0)$ and $(0,0,1)$ are the only ones that satisfy
$xyz=0$. In particular, there are two points at infinity.
If $xyz\ne 0$, then we can look at the affine curve $C_z=x^3y+y^3+x$.
Over $\FF_2$ it is clear that there are no more points, hence
$N_1=3$. Over $\FF_4$, there are two more points:
$(\alpha,\alpha^2,1)$ and $(\alpha^2,\alpha,1)$, where
$\alpha^2=\alpha+1$, and so $N_2=5$. To get $N_3$,
let $\FF_8$ be generated by $\beta$ with $\beta^3=\beta+1$.
Since $y^3=y^{10}$, on dividing $C_z$ by $y^3$ we get
$(x/y^3)^3+1+x/y^3=0$. Since $\xi^3+\xi+1=0$ has three solutions
in $\FF_8$ ($\beta, \beta^2,\beta^4$), we conclude that 
$C_z$ has $7\times 3=21$ poins other than $(0,0)$ that are
$\FF_8$-rational and therefore $N_3=24$.
With this, the values for $N_k$ supplied by \textsf{XN}
(for $k\le 12)$ are the following:

\smallskip
\begin{tabular}{c|cccccccccccc}
$k$   &1 &2 &3  &4  &5  &6  &7  &8  &9  &10  &11  &12   \\
\hline
$N_k$ &3 &5 &24 &17 &33 &38 &129&257&528&1025&2049&4238 
\end{tabular}

\medskip
Over $\FF_5$, one finds that $N_1=6$, $N_2=26$ and $N_3=126$. With this,
we can find a similar table (for $k=1,\dotsc,9$):

\smallskip
\begin{tabular}{c|cccccccccc}
$k$   &1 &2 &3  &4  &5  &6  &7  &8  &9    \\
\hline
$N_k$ &6 &26 &126 &626 &3126 &16376 & 78126 & 390626 & 1953126 
\end{tabular}

\appendix

\section{Newton sums}\label{ap:newton-sums}
Let $\seq{\alpha}{1}{n}$ be variables. For $j=1,\dotsc,n$,
let 
\[c_j(\seq{\alpha}{1}{n})=(-1)^j\sigma_j(\seq{\alpha}{1}{n}),
\]
where $\sigma_j$ is the degree $j$ symmetric polynomial in $\seq{\alpha}{1}{n}$.
Finally, let $S_j=\alpha_1^j+\cdots+\alpha_n^j$ for all $j\ge0$, with the convention
$S_0=n$. 

\begin{prop}[Girard-Newton identities]\label{lemma:newton-sums}
\emph{(1)} If $j\ge n$, then 
\begin{equation}\label{eq:rec1}
S_j+c_1S_{j-1}+\cdots+c_{n-1}S_{j-(n-1)}+c_nS_{j-n}=0.
\end{equation}
\emph{(2)} If  $1\le j\le n$, then
\begin{equation}\label{eq:rec2}
S_j+c_1S_{j-1}+\cdots+c_{j-1}S_1+jc_j=0.
\end{equation}
\end{prop}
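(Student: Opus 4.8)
The plan is to prove both identities simultaneously by working with the generating function of the power sums. Introduce the polynomial $f(T)=\prod_{i=1}^{n}(T-\alpha_i)=\sum_{k=0}^{n}c_kT^{n-k}$, so that $c_0=1$ and $c_k=(-1)^k\sigma_k$ as defined. The key observation is the logarithmic–derivative identity
\[
\frac{f'(T)}{f(T)}=\sum_{i=1}^{n}\frac{1}{T-\alpha_i},
\]
and each summand on the right can be expanded as a geometric series $\frac1T\sum_{m\ge0}(\alpha_i/T)^m$, valid formally, giving $f'(T)/f(T)=\sum_{m\ge0}S_m T^{-m-1}$ with the convention $S_0=n$. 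Clearing denominators turns this into the polynomial identity $f'(T)=f(T)\cdot\bigl(\sum_{m\ge0}S_m T^{-m-1}\bigr)$, and comparing coefficients of like powers of $T$ will yield both recurrences at once.

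First I would make the formal power series manipulation rigorous: rather than invoking convergence, I would define $g(T)=\sum_{m\ge0}S_mT^{-m-1}$ as an element of $\ZZ[\alpha_1,\dotsc,\alpha_n][[1/T]]\cdot T^{-1}$ and verify directly that $f(T)g(T)=f'(T)$ by checking $\sum_i \frac{1}{T-\alpha_i}=\sum_{m\ge0}S_mT^{-m-1}$ termwise — this is just $\frac{1}{T-\alpha_i}=\sum_{m\ge0}\alpha_i^mT^{-m-1}$ summed over $i$. Then I would multiply the explicit expansions: $f(T)=\sum_{k=0}^n c_k T^{n-k}$ and $g(T)=\sum_{m\ge0}S_mT^{-m-1}$, so the product has, in the coefficient of $T^{n-1-j}$, the sum $\sum_{k+m=j,\,0\le k\le n} c_k S_m$ for each $j\ge0$. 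On the other side, $f'(T)=\sum_{k=0}^{n}(n-k)c_kT^{n-k-1}$, whose coefficient of $T^{n-1-j}$ is $(n-j)c_j$ when $0\le j\le n$ and $0$ when $j>n$.

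Equating these coefficients gives, for each $j\ge0$,
\[
\sum_{\substack{k+m=j\\0\le k\le n}} c_k S_m \;=\; \begin{cases}(n-j)c_j & 0\le j\le n,\\[2pt] 0 & j>n.\end{cases}
\]
For $1\le j\le n$ the left side is $S_j+c_1S_{j-1}+\cdots+c_{j-1}S_1+c_jS_0=S_j+c_1S_{j-1}+\cdots+c_{j-1}S_1+nc_j$; subtracting $(n-j)c_j$ from both sides and using $c_0=1$ produces exactly \eqref{eq:rec2}. For $j\ge n$ the constraint $k\le n$ is automatically satisfied for all the relevant terms $k=0,\dotsc,n$ (since $m=j-k\ge0$), so the left side is $S_j+c_1S_{j-1}+\cdots+c_nS_{j-n}$, which equals $0$, giving \eqref{eq:rec1}. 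I expect the only delicate point to be the bookkeeping of index ranges — in particular making sure that for $j>n$ no extra terms with $k>n$ intrude (they don't, because $c_k=0$ for $k>n$) and that the $S_0=n$ convention is applied consistently; everything else is a routine comparison of coefficients. An alternative, entirely combinatorial proof via the expansion $c_j=(-1)^j\sigma_j$ and counting monomials would also work, but the generating-function argument is shorter and handles both cases in one stroke.
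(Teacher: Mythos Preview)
Your argument is correct, but it takes a genuinely different route from the paper's. The paper handles the two parts separately: for (1) it simply substitutes each $\alpha_i$ into the factored form $\sum_{k=0}^n c_k X^{n-k}=\prod_i(X-\alpha_i)$, multiplies by $\alpha_i^{j-n}$, and sums over $i$; for (2) it argues by induction on $n$, observing that setting $\alpha_n=0$ reduces each expression $S_j+c_1S_{j-1}+\cdots+c_{j-1}S_1+jc_j$ (for $j<n$) to its $(n-1)$-variable analogue, so $\alpha_n$ divides it, hence by symmetry $\alpha_1\cdots\alpha_n$ divides it, and a degree count forces it to vanish. Your logarithmic-derivative/generating-function approach has the advantage of treating both regimes $j\le n$ and $j\ge n$ in a single stroke and of making the origin of the ``extra'' term $jc_j$ transparent (it comes from the mismatch between $nc_j$ on the product side and $(n-j)c_j$ on the $f'$ side). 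The paper's approach, on the other hand, is entirely elementary---no formal Laurent series are needed---and the induction/divisibility trick for (2) is a pleasant alternative worth knowing.
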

\begin{proof}
(1) Let $X$ be a new variable. From the definitions it follows that 
\[
\sum_{k=0}^{n}c_k X^{n-k}=(X-\alpha_1)\cdots(X-\alpha_n).
\] 
Therefore 
$\sum_{k=0}^{n}c_k \alpha_i^{n-k}=0$ for $i=1,\dotsc,n$ (Vieta's formulas). 
If we multiply this relation by $\alpha_i^{j-n}$ ($j\ge n$) and sum for $i=1,\dotsc,n$, 
we get $\sum_{k=0}^{n}c_k S_{j-k}=0$, which is the stated equation.

(2) We will proceed by induction on $n$. For $n=1$, the statement,
namely $S_1+c_1=0$, is tautologically true. Assume now that
the statement is true for $n-1$:
\[
S'_{j}+c'_1S'_{j-1}+\cdots+c'_{j-1}S'_1+jc'_j=0
\]
for $1\le j\le n-1$, where the $S'_j$ and $c'_j$ have the same meaning as
$S_j$ and $c_j$, but with respect to the variables 
$\alpha_1,\dotsc,\alpha_{n-1}$. The key observation is that $S'_j$ and $c'_j$
coincide with the result of setting $\alpha_n=0$ in $S_j$ and $c_j$.
This implies that the polynomials
$S_j+c_1S_{j-1}+\cdots+c_{j-1}S_{1}+jc_j$
(for $1\le j\le n-1$) are divisible by $\alpha_n$. Since they are
symmetric in $\seq{\alpha}{1}{n}$, they are divisible by $\alpha_1\cdots\alpha_n$.
Therefore they vanish, as their degrees are $<n$. Finally note
that the equality for $j=n$ has been established in $(1)$. 
\end{proof}

\section{Python code}\label{ap:python-code}

The listings in this appendix are available for downloading at
\href{https://mat-web.upc.edu/people/sebastia.xambo/Listings-MSX}{Listings-MSX}.

\paragraph{The function \textsf{XN}}
This function implements our main algorithm (Section \ref{sec:main})
Besides the standard Python facilities, only 
a working implementation of the rational numbers 
(here denoted \textsf{Q}) is needed.

{\small
\begin{verbatim}
def XN(q,X,k):
    g = len(X)  # the genus of the curve
    if k<=g: return X[:k]   # only k>g gives something new
    X = [0]+X   # trick so that X[j] refers to F_{q^j}
    X = [x>>Q for x in X]  # consider X as a list of Q 
    S = [q**(j)+1-X[j] for j in range(1,g+1)] # First g Newton sums
    S = [0]+S   # similar trick as for X
    # Computation of c_1,...,c_g; set c0=1 
    c = [1>>Q] # to compute in the rational field
    for j in range(1,g+1):
        cj = S[j]
        for i in range(1,j):
            cj += c[i]*S[j-i]
        c += [-cj/j]
    # Add c_{g+i}, for i=1,...,g
    for i in range(1,g+1):
        c += [q**i*c[g-i]]
    # Find Sj for j = g+1,...,k
    for j in range(g+1,k+1):
        if j>2*g:
            Sj=0
        else: 
            Sj = j*c[j]
        for i in range(1,j):
            if i>2*g: break
            Sj += c[i]*S[j-i]
        S += [-Sj]
    # Find X[i] for i = g+1,...,k
    for i in range(g+1,k+1):
        X += [q**i+1-S[i]]
    return X[1:]
\end{verbatim}
}

\paragraph{The Deuring function} This function implements
Deuring's algorithm to list the possible cardinals $\# E(\FF_q)$
of the elliptic curves $E/\FF_q$. Our main reference here has been
\cite{Waterhouse-1969}. We have split the computation in two
parts: the function \textsf{Deuring\_offsets(q)} (which computes
the list of integers $t$ in the segment $[-m,m]$, $m=\floor{2\sqrt{q}}$,
such that $\# E(\FF_q)=q+1-t$ for some $E$), and
\textsf{Deuring\_set(q)}, that outputs the list in question.

\newpage
{\small
\begin{verbatim}
def deuring_offsets(q):
    P = prime_factors(q)  # prime_factors(12) => [2, 2, 3]
    p = P[0]; n = len(P)
    m = int(2*sqrt(q))
    D = [t for t in range(-m,m+1) if gcd(p,t)==1]
    if n%2==0: 
        r = p**(n//2)
        D += [-2*r,2*r] 
        if p%3 != 1: 
            D += [-r,r]
    if n%2 and (p==2 or p==3):
        r = p**((n+1)//2)
        D += [-r,r]
    if n%2 or (n%2==0 and p%4!=1):
        D += [0]
    return sorted([t for t in D])
#
def Deuring_set(q):
    D =deuring_offsets(q)
    return [t+q+1 for t in D]
\end{verbatim}
}

Examples: Here are the Deuring lists for
the first 8 prime powers $q$
{\small
\begin{verbatim}
    q=2  (m=2): [1, 2, 3, 4, 5]
    q=3  (m=3): [1, 2, 3, 4, 5, 6, 7]
    q=4  (m=4): [1, 2, 3, 4, 5, 6, 7, 8, 9]
    q=5  (m=4): [2, 3, 4, 5, 6, 7, 8, 9, 10]
    q=7  (m=5): [3, 4, 5, 6, 7, 8, 9, 10, 11, 12, 13]
    q=8  (m=5): [4, 5, 6, 8, 9, 10, 12, 13, 14]
    q=9  (m=6): [4, 5, 6, 7, 8, 9, 10, 11, 12, 13, 14, 15, 16]
    q=11 (m=6): [6, 7, 8, 9, 10, 11, 12, 13, 14, 15, 16, 17, 18]
\end{verbatim}
}

\paragraph{The Serre procedure for the function $N_q(g)$, $g=1,2,3$}
For the function \textsf{Serre} we have followed \cite{Serre-1983a}.

\begin{verbatim}
def Serre(q, g=1):
    D = ifactor(q)          # ifactor(12) =>  {2: 2, 3: 1}
    if len(D)>1: 
		return 'Serre: {} is not a prime power'.format(q)
    p = list(D)[0]
    e = D[p]                # q = p^e
    m = int(2*sqrt(q))      # g·m is the HWS bound
    if g==1:
        if e%2 and e>=3 and m%p==0: return q+m
        else: return q+m+1
    if g==2:
        if q==4: return 10
        if q==9: return 20
        if e%2==0: return q+1+2*m
        def special(s):
            if m%p==0 or is_square(s-1) or 
                         is_square(4*s-3) or 
                         is_square(4*s-7):
                return True
            else: return False
        if special(q):
            if 2*sqrt(q)-m > (sqrt(5)-1)/2: return q+2*m
            else: return q+2*m-1
        return q+1+2*m
    if g==3:
        P = [2,3,4,5,7,8,9]
        T={2:7,3:10,4:14,5:16,7:20,8:24,9:28}
        if P.count(q): return T[q]
    return "Serre> I do not know the value of N({},{})".format(q,g)
\end{verbatim}



\begin{thebibliography}{10}

\bibitem{Hartshorne-1983}
R.~Hartshorne.
\newblock {\em {Algebraic Geometry}}.
\newblock Springer-Verlag, 1983 (corrected 3rd printing), 1977 (first edition).

\bibitem{Husemoller-1987}
D.~Husemöller.
\newblock {\em Elliptic curves}, volume 111 of {\em GRM}.
\newblock Springer-Verlag, 1987.

\bibitem{Raskin-2007}
S.~Raskin.
\newblock {The Weil conjectures for curves}, 2007.
\newblock Notes based on 2007 lectures by Sasha Beilinson at the University of
  Chicago.

\bibitem{Schoof-1995}
R.~Schoof.
\newblock Counting points on elliptic curves over finite fields.
\newblock {\em Journal de th{\'e}orie des nombres de Bordeaux}, 7(1):219--254,
  1995.

\bibitem{Serre-1983a}
J.-P. Serre.
\newblock Nombre de points des courbes algébriques sur $\mathbb{F}_q$.
\newblock {\em Séminaire de Théorie des Nombres de Bordeaux 1982/83}, 22,
  1983.
\newblock Included in \cite{Serre-1986}, number \textbf{129}, 664-668.

\bibitem{Serre-1983b}
J.-P. Serre.
\newblock Sur le nombre de points rationels d’une courbe algébrique sur un
  corps fini.
\newblock {\em C.R. Acad. Sci. Paris, \emph{série I}}, 296:397--402, 1983.
\newblock Included in \cite{Serre-1986}, number \textbf{128}, 658-663.

\bibitem{Serre-1984}
J.-P. Serre.
\newblock Résumé des cours de 1983-1984.
\newblock {\em Annuaire du Collège de France}, 79-83, 1984.
\newblock Included in \cite{Serre-1986}, number \textbf{132}, 701-705.

\bibitem{Serre-1986}
J.~P. Serre.
\newblock {\em {Oeuvres}, III (1972-1984)}.
\newblock Springer-Verlag, 1986.

\bibitem{Silverman-2009}
J.~H. Silverman.
\newblock {\em The arithmetic of elliptic curves}, volume 106 of {\em GTM}.
\newblock Springer, 2009 (2nd edition; 1st edition in 1986; printing with
  corrections 2016).

\bibitem{Torres-2008}
F.~Torres.
\newblock Algebraic curves with many points over finite fields.
\newblock {\em Advances in algebraic geometry codes}, 5:221, 2008 (revised
  version 2014).

\bibitem{vdGeer-2015}
G.~van~der Geer.
\newblock Counting curves over finite fields.
\newblock {\em Finite fields and their applications}, 32:207--232, 2015.

\bibitem{Waterhouse-1969}
W.~C. Waterhouse.
\newblock Abelian varieties over finite fields.
\newblock In {\em Annales Scientifiques de l'{\'E}cole Normale Sup{\'e}rieure},
  volume~2, pages 521--560, 1969.

\bibitem{Weil-1948}
A.~Weil.
\newblock {\em {Sur les courbes algébriques et les variétés qui s'en
  deduisent}}.
\newblock Hermann, Paris, 1948.

\bibitem{Yaghoobian-Blake-1994}
T.~Yaghoobian and I.~F. Blake.
\newblock {On Reed-Solomon and algebraic geometry codes}.
\newblock In S.~B. Wicker and V.~K. Bhargava, editors, {\em Reed-Solomon codes
  and their applications}, pages 292--314. IEEE Press, 1994.

\end{thebibliography}
\end{document}